\providecommand{\U}[1]{\protect\rule{.1in}{.1in}}
\newtheorem{theorem}{Theorem}
\newtheorem{corollary}[theorem]{Corollary}
\newtheorem{lemma}[theorem]{Lemma}
\newtheorem{proposition}[theorem]{Proposition}
\newenvironment{proof}[1][Proof]{\noindent\textbf{#1.} }{\ \rule{0.5em}{0.5em}}
\begin{document}

\title{On lattice homomorphisms in Riesz spaces }
\author{Elmiloud Chil, Fateh Mekdour\\Elmiloud Chil University of Tunis \\Institut pr\'{e}paratoire aux \'{e}tudes d'ingenieurs de Tunis\\2 Rue Jawaher lel Nehrou Monflery 1008 TUNISIA,\\Elmiloud.chil@ipeit.rnu.tn\\Fateh Mekdour L.A.T.A.O. Faculty of sciences of Tunis University\\ELManar Compus universitaire Elmanar TUNISIA,\\fatehmekdour@gmail.com}
\date{}
\maketitle

\begin{abstract}
In this paper, we study the connextion between lattice and Riesz homomorphisms
in Riesz spaces. We prove, under a certain condition, that any lattice
homomorphism on Riesz space is a Riesz homomorphism. This fits in the type of
results by Mena and Roth [7], thanh [9], Lochan and Strauss [5] and Ercan and
Wickstead [2].

\end{abstract}

\noindent\textbf{2010 Mathematics Subject Classification.} 06F25, 46A40.

\noindent\textbf{Keywords: }Riesz spaces, lattice and Riesz homomorphisms.

\section{Introduction and Preliminaries}

Let $E$ and $F$ be two Riesz spaces. A mapping $T:E\rightarrow F$ is said to
be a \textit{lattice homomorphism} after Mena and Roth in [7] whenever
\[
T(x\vee y)=Tx\vee Ty\text{ and }T(x\wedge y)=Tx\wedge Ty\text{, for all
}x,y\in E\text{.}%
\]
A linear homomorphism is called a \textit{Riesz homomorphism}. We will look at
the following questions: When is all lattice homomorphism Riesz homomorphism?
This problem can be treated in a different manner, depending on domains and
co-domains on which maps acts. In the last decade, several authors have been
studied this problem on different Archimedean Riesz spaces. The study of the
relationship between lattice and Riesz homomorphism was really inaugurated in
1978 thanks to the fundamental work of Mena and Roth [7], who were interested
essentially for the setting of the Riesz spaces of real valued continuous
functions defined on a compact Hausdorff spaces. They proved that if $X$ and
$Y$ are compact Hausdorff spaces and $T:C(X)\rightarrow C(Y)$ is a lattice
homomorphism such that $T(\lambda1)=\lambda T(1)$ for all $\lambda\in%
\mathbb{R}
$, then $T$ is linear. Later, many authors interested in this problem. In [9],
Thanh generalized Mena and Roth's result to the case when $X$ and $Y$ are real
compact spaces. For another generalization by Lochan and Strauss see [5]. So
far the best results, in this field, are duo to Ercan and Wickstead in [2].
They showed from the theorem of Mana and Roth by using the Kakutani
representation theorem, that if $E$ and $F$ are uniformly complete Archimedean
Riesz spaces with weak order units $e_{1}\in E$ and $e_{2}\in F$ and if
$T:E\rightarrow F$ is a lattice homomorphism such that $T(\lambda
e_{1})=\lambda e_{2}$ for all $\lambda\in%
\mathbb{R}
$, then $T$ is linear. As an application they gave a corresponding result for
the case where $E$ and $F$ are two uniformly complete Archimedean Riesz spaces
with disjoint complete systems of projections as an example for two $\sigma
$-Dedekind complete Riesz spaces. It will be noted that the proofs of the last
results are heavily based on the uniform structure of the spaces, hence it can
not be expected that it can be adapted to the Riesz space case. Surprisingly
enough, to the best of our knowledge, no attention has been paid in the
literature of this problem for the general case of Riesz spaces. It seems that
the main reason for this, it lies in the fact of the topological richness of
the uniformly complete structure of Riesz spaces with weak order units which
gives a particular interest to lattice homomorphisms on those spaces. It seems
natural therefore to ask what happenings in the general case of Riesz spaces?
What about weaker condition under which a lattice homomorphism defined on a
Riesz space is linear? The above results tell us what is happening in some
cases, but there are certainly gaps in the general case. The complications are
partly due to the delicate behavior of lattice homomorphisms, partly to the
topological poverty of the domain. If we focus our attention to find a manner
for extending the proofs of aforementioned results from an Archimedean Riesz
space to its uniformly completion, then we would hope to establish more. In
the present paper, as indicated by its title, we take a look at this problem.
Our main goal is to prove that, in the results of Mena and Roth [7], Thanh
[9], Lochan and Strauss [5] and Ercan and Wickstead [2], the assumption of the
uniform completeness condition on the domain of mapping is superfluous. As
applications we give some generalization for those results. Actually, we
provide not only new results but also new techniques, which we think that are
useful additions to the literature. Our results here are to be compared with
the corresponding ones in the last mentioned papers, in this respect we find
that the lattice structure of maps is quite natural in this setting, and even
that it has some advantages, as the key results are obtained more directly constructive.

We take it for granted that the reader is familiar with notions of Riesz
spaces (or vector lattices) and operators between them. For terminology,
notations and concepts that are not explained in this paper, one can refer to
the standard monographs [1,6,8].

In order to avoid unnecessary repetition we will assume throughout the paper
that all Riesz spaces under consideration are Archimedean.

A Riesz space is called \textit{Dedekind complete} whenever every nonempty
subset that is bounded from above has a supremum. Similarly, a Riesz space is
said to be $\sigma$\textit{-Dedekind complete} whenever every countable subset
that is bounded from above has a supremum. Let us say that a vector subspace
$G$ of a Riesz space $E$ is \textit{majorizing} $E$ whenever for each $x\in E$
there exists some $y\in G$ with $x\leq y$.(or, equivalently whenever for each
$x\in E$ there exists some $y\in G$ with $y\leq x$)$.$ A Dedekind complete
Riesz space $L$ is said to be a \textit{Dedekind completion} of the Riesz
space $E$ whenever $E$ is Riesz isomorphic to an order dense majorizing Riesz
subspace of $L$ (which we identify with $E$). It is a classical result that
every Archimedean Riesz space $E$ has a Dedekind completion, which we shall
denoted by $E^{\delta}$.

The relatively uniform topology on Riesz spaces plays a key role in the
context of this work. Let us recall the definition of the \textit{relatively
uniform convergence}. Let $E$ be an Archimedean Riesz space and an element
$u\in E^{+}$. A sequence $(x_{n})_{n}$ of elements of $E$ \textit{converges
}$u$\textit{-uniformly} to the element $x\in E$ whenever, for every
$\epsilon\geq0$, there exists a natural number $N_{\epsilon}$ such that
$|x_{n}-x|\leq\epsilon u$ holds for all all $n\geq N_{\epsilon}$. This will be
denoted $x_{n}\rightarrow x(r.u)$. The element $u$ is called the
\textit{regulator of the convergence}. The sequence $(x_{n})_{n}$
\textit{converges relatively uniformly} to $x\in E$, whenever $x_{n}%
\rightarrow x(u)$ for some $u\in E^{+}$. We shall write $x_{n}\rightarrow
x(r.u)$ if we do not want to specify the regulator. \textit{Relatively uniform
limits} are unique if and only if $E$ is Archimedean [6, Theorem 63.2]. A
nonempty subset $D$ of $E$ is said to be \textit{relatively uniformly closed}
if every relatively uniformly convergent sequence in $D$ has its limit also in
$D$. We emphasize that the regulator does need not to be an element of $D$.
The relatively uniformly closed subsets are the closed sets of a topology in
$E$, namely, the relatively uniform topology. The closure, with respect to the
relatively uniform topology of the Riesz space $E$ in its Dedekind completion
is a uniformly complete vector lattice, denoted by $E^{ru}$ and referred to as
the uniform completion of $E$ (see [6]). Thus $E$ is an order dense majorizing
Riesz subspace of $E^{ru}$.

The notion of \textit{relatively uniform Cauchy sequences} is defined in the
obvious way. A Riesz space $E$ is said to be \textit{relatively uniformly
complete} whenever every relatively uniformly Cauchy sequence has a (unique)
limit. For more details we refer the reader to [6].

\section{Main results}

In order to reach our main result we need some prerequisites.

\begin{lemma}
Let $E$ be a Riesz space and $E^{ru}$ its uniformly complete then for every
$x\in E^{ru}$ there exists a sequence $(x_{n})_{n}\in E$ such that
$x_{n}\nearrow x(r.u)$.
\end{lemma}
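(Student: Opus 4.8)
The plan is to exploit the concrete description of $E^{ru}$ given in the preliminaries: it is the relatively uniform closure of $E$ inside the Dedekind completion $E^{\delta}$, so every $x\in E^{ru}$ is the relatively uniform limit of some sequence in $E$. Fix $x\in E^{ru}$ and choose $(y_{n})_{n}\subseteq E$ with $y_{n}\to x\,(r.u)$; let $u\in E^{+}$ be a regulator, so that for each $k$ there is $N_{k}$ with $|y_{n}-x|\le \tfrac{1}{k}u$ for all $n\ge N_{k}$. Passing to a subsequence we may assume $|y_{k}-x|\le \tfrac{1}{k}u$ for every $k$. Now $u\in E^{ru}\subseteq E^{\delta}$, but we need the correcting element to live in $E$; this is where the fact that $E$ is \emph{majorizing} in $E^{ru}$ (stated in the preliminaries) enters: pick $v\in E$ with $u\le v$, so also $|y_{k}-x|\le\tfrac1k v$ for all $k$.

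The second step is to turn the sequence $(y_{k})$ into an \emph{increasing} one converging from below. Set $x_{n}:=\bigl(y_{n}-\tfrac1n v\bigr)\in E$. From $|y_{n}-x|\le\tfrac1n v$ we get $x_{n}=y_{n}-\tfrac1n v\le x$, so each $x_{n}$ is a lower bound, and $|x_{n}-x|=|y_{n}-\tfrac1n v-x|\le |y_{n}-x|+\tfrac1n v\le \tfrac2n v$, hence $x_{n}\to x\,(r.u)$ with regulator $v$. It remains to arrange monotonicity. The clean device is to replace $x_{n}$ by $z_{n}:=x_{1}\vee x_{2}\vee\cdots\vee x_{n}\in E$; then $z_{n}\nearrow$, still $z_{n}\le x$ since $x$ dominates each $x_{k}$, and $0\le x-z_{n}\le x-x_{n}\le\tfrac{2}{n}v$, so $z_{n}\to x\,(r.u)$. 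Thus $z_{n}\nearrow x\,(r.u)$, which is the claim after renaming.

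The only genuine subtlety — and the step I would flag as the main obstacle — is the legitimacy of subtracting $\tfrac1n v$ and of forming the finite suprema $z_{n}$: these operations must be carried out in a space that actually contains all the relevant elements and in which the order relation $z_{n}\le x$ makes sense. Since $E$ is a Riesz subspace of $E^{ru}$, the elements $x_{n}$ and $z_{n}$ genuinely belong to $E$, and the inequalities $z_{n}\le x$ are inequalities in $E^{ru}$ (equivalently in $E^{\delta}$), so nothing goes wrong; one just has to be careful to state at the outset in which ambient space each manipulation takes place. A minor point worth a sentence is that relatively uniform limits are unique because $E^{ru}$ is Archimedean, so the limit of $(z_{n})$ is indeed the prescribed $x$. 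No appeal to Dedekind or $\sigma$-Dedekind completeness is needed — only finite lattice operations, which are available in any Riesz space.
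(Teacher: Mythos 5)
There is a genuine gap, and it sits exactly at the step you pass over in one line: the claim that, because $E^{ru}$ is the closure of $E$ in $E^{\delta}$ with respect to the relatively uniform topology, every $x\in E^{ru}$ is automatically the relatively uniform limit of some sequence $(y_n)_n\subseteq E$. The closed sets of this topology are defined by a sequential condition, but the closure of $E$ is the smallest such closed set containing $E$, and in general this strictly contains the set of relatively uniform limits of sequences from $E$ (the pseudo-closure): the pseudo-closure operation need not be idempotent in an arbitrary Archimedean Riesz space, since the relatively uniform topology is not metrizable or first countable in general, so reaching the closure may require iterating the operation of taking sequential limits. In other words, your first sentence assumes precisely the nontrivial content of the lemma. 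This is why the paper does not argue from the definition but localizes: given $x\in E^{ru}$, it picks $y\in E$ with $|x|\le y$ (majorizing), passes to the principal ideals $E_y\subseteq E_y^{ru}$, and invokes the Kakutani representation $E_y^{ru}\cong C(X)$, where relatively uniform convergence coincides with convergence in the uniform norm; there the topology is metrizable, the pseudo-closure and the closure coincide, and only then does one obtain an honest sequence in $E$ (indeed in $E_y$) converging relatively uniformly to $x$.

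Once a sequence $(y_n)_n\subseteq E$ with $y_n\to x\,(r.u.)$ is in hand, your second step is correct and is in fact a useful supplement to the paper: choosing $v\in E$ dominating the regulator, subtracting $\tfrac{1}{n}v$ to obtain lower bounds $x_n\le x$, and then passing to the finite suprema $z_n=x_1\vee\cdots\vee x_n$ yields an increasing sequence in $E$ with $0\le x-z_n\le\tfrac{2}{n}v$, hence $z_n\nearrow x\,(r.u.)$; the paper asserts the monotone convergence without spelling this detail out. So the monotonicity issue you flagged as the main obstacle is unproblematic; the missing piece is the passage from membership in the relatively uniform closure to the existence of a convergent sequence from $E$, which is exactly what the Kakutani-based localization in the paper supplies.
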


\begin{proof}
Let $0\neq x\in E^{ru}$. Since $E$ is majorizing in $E^{ru}$, there exists
$0<y\in E$ such that%
\[
0<\left\vert x\right\vert \leq y.
\]
Now let $E_{y}$ be the principal ideal generated by $y$ in $E,$ and let
$E_{y}^{ru}$ be the principal ideal generated by $y$ in $E^{ru}.$ By the
Kakutani theorem [8, Theorem 2.1.3], $E_{y}^{ru}$ is order isomorphic to a
$C(X)$ for a compact Hausdorff space $X$ and so $E_{y}$ is uniformly dense in
$C(X)=E_{y}^{ru}$. Now by the fact that in $C(X)$ the relatively uniform
convergence and uniform convergence of sequences coincide, the relatively
uniform closure of a set is the familiar uniform closure and the relatively
uniform topology is the norm topology with respect to the uniform norm. It
immediately follows that the pseudo closure and the closure of any set with
respect to this topology are the same. Since $x\in E_{y}^{ru}$, there exists a
sequence $(x_{n})_{n}\in E_{y}$ such that $x_{n}\nearrow x(r.u)$.
\end{proof}

The following results deals with prime ideal in Riesz space which plays an
important role in our approach. A prime ideal $P$ of a Riesz space $E$ is a
nonempty proper lattice subset of $E$ (not necessarily a vector subspace) that
containing with any element all smaller ones and with $x\in P$ or $y\in P$
whenever $x\wedge y\in P$. We say that a prime ideal $P$ in $C(X)$ is
associated with a point $x\in X$ if $g\in P$ whenever $f\in P$ and
$g(x)<f(x).$ In [4] Kaplansky proved that for $X$ compact, every prime ideal
in $C(X)$ is associated with some point of $X$ and this point is unique if $X$
is also Hausdorff. In the same paper, he proved also that if $P\subset Q$
where $P$ and $Q$ are prime ideals in $C(X),$ $X$ is a compact Hausdorff space
then $P$ and $Q$ are associated with the same point. For more details about
prime ideals in $C(X)$ see [4].

The following result will be of great use next.

\begin{proposition}
Let $E$ be a Riesz space and $P$ be a prime ideal of $E$ then $P^{ru}$, the
uniform completion of $P$, is a prime ideal of $E^{ru}$.
\end{proposition}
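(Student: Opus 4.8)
The plan is to verify the two defining properties of a prime ideal for $P^{ru}$ inside $E^{ru}$: first that $P^{ru}$ is an ideal (solid vector subspace — actually, following the paper's convention, a proper lattice subset closed downward), and second that it is prime, i.e. $x\wedge y\in P^{ru}$ forces $x\in P^{ru}$ or $y\in P^{ru}$. The main tool will be Lemma 2, which lets us approximate any element of $E^{ru}$ (respectively of $P^{ru}$) from below, relatively uniformly, by an increasing sequence from $E$ (respectively from $P$); combined with the fact that $P^{ru}$ is by definition the relatively uniform closure of $P$ in $E^{ru}$.

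First I would record that $P^{ru}$ is proper: since $P$ is a proper ideal of $E$, there is $0<u\in E$ with $u\notin P$, and because $P$ is solid, $nu\notin P$ for all $n$; a relatively uniform limit of elements of $P$ dominated suitably cannot escape this, so $u\notin P^{ru}$ and hence $P^{ru}\neq E^{ru}$. Next, the downward-closure (solidity) of $P^{ru}$: given $x\in P^{ru}$ and $|y|\le|x|$ in $E^{ru}$, take $(p_n)\subset P$ with $p_n\to x$ $(r.u)$ with regulator $w$; one approximates $y$ by suitable truncations/combinations lying in $P$. Here I would use that $P$ is itself solid in $E$ together with Lemma 2 applied to $|x|\in P^{ru}$ to get $q_n\nearrow|x|$ $(r.u)$ with $q_n\in P$, and then the elements $(-q_n)\vee(y\wedge q_n)\in P$ (solidity of $P$) converge relatively uniformly to $y$. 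This shows $y\in P^{ru}$.

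For primeness, suppose $x\wedge y\in P^{ru}$. The idea is to reduce to the classical $C(X)$ statement used earlier in the paper (Kaplansky's description of prime ideals), exactly as in the proof of Lemma 2. Choose $0<z\in E$ dominating $|x|\vee|y|$, pass to the principal ideals $E_z\subseteq E_z^{ru}\cong C(X)$ for a compact Hausdorff $X$; then $P\cap E_z$ is a prime ideal of $E_z$, and its uniform closure inside $C(X)$ is a prime ideal of $C(X)$ by Kaplansky's theorem together with the fact (noted in the proof of Lemma 2) that in $C(X)$ relatively uniform closure is ordinary uniform (norm) closure. Since $x,y,x\wedge y$ all lie in $E_z^{ru}=C(X)$ and $x\wedge y$ lies in this prime ideal of $C(X)$, we get $x$ or $y$ in it, hence in $P^{ru}$.

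The main obstacle I anticipate is the bookkeeping at the interface between the ``global'' object $P^{ru}$ (closure of $P$ in $E^{ru}$) and the ``local'' object (closure of $P\cap E_z$ in $E_z^{ru}$): one must check that $P^{ru}\cap E_z^{ru}$ coincides with the uniform closure of $P\cap E_z$ in $E_z^{ru}$, i.e. that approximating sequences can be taken inside $E_z$ with regulator a multiple of $z$. This follows because relatively uniform convergence with regulator $u$ of a sequence bounded in $E_z$ automatically has its limit in $E_z$ and the convergence is with regulator $z$ (up to scaling), but it is the point that needs care. A secondary, more routine point is confirming that $P\cap E_z$ is genuinely a \emph{prime} ideal of $E_z$ (it inherits solidity, the prime splitting property, and properness from $P$, the last because $z\notin P$ is not guaranteed — so one may instead argue directly that primeness of $P$ in $E$ restricts to $E_z$, or absorb this into the $C(X)$ argument).
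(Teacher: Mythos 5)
Your approach differs from the paper's, but as written it has several genuine gaps. The properness step rests on the claim that a positive $u\notin P$ cannot lie in $P^{ru}$; this is false. Take $E=C([0,1])$ (so $E^{ru}=E$) and $P=\{f:f(0)<0\}$, which is a prime ideal in the paper's sense; then $u(t)=t$ satisfies $0<u\notin P$, yet $u\in P^{ru}=\{f:f(0)\le 0\}$ because $u-\tfrac1n\in P$ converges uniformly to $u$. The observation that $nu\notin P$ gives no mechanism for excluding $u$ from the closure. Relatedly, you twice invoke ``solidity'' of $P$ (for $nu\notin P$, and to place $(-q_n)\vee(y\wedge q_n)$ in $P$), but the paper's prime ideals are only downward-closed sublattices, not solid sets: in the example above $-1\in P$ and $|1|\le|-1|$, yet $1\notin P$. (What is actually needed for the ideal part is downward closedness of $P^{ru}$, which has an easy direct proof via $e_n\wedge p_n$ with $e_n\in E$, $p_n\in P$; also note the Lemma produces approximants from $E$, not from $P$.) The paper argues properness quite differently, through the majorization of $P^{ru}$ by $P$.

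For primeness your reduction to $C(X)$ has two load-bearing steps, and as written both fail. First, the assertion that the uniform closure of $P\cap E_z$ in $C(X)$ is a prime ideal is not given by Kaplansky's theorem: that theorem describes prime ideals of $C(X)$ itself and says nothing about closures of prime ideals of dense sublattices; this is essentially the very statement you are proving (in the principal-ideal case), so it needs its own approximation argument (it can be supplied, e.g.\ by approximating $f$ and $g$ by elements of $E_z$ lying strictly below them so that their infimum falls under an element of the trace ideal), but you have not given it. Second, your interface claim is justified by a false statement: relatively uniform convergence of a sequence lying in $E_z$ need not be convergence with regulator $z$ --- in $C([0,1])$, $\min(t,1/n)\to 0$ uniformly with each term below $z(t)=t$, yet no multiple of $z$ works as regulator --- and worse, the approximating sequence from $P$ need not lie in $E_z$ at all, while with $z$ chosen only to dominate $|x|\vee|y|$ the trace $P\cap E_z$ may even be empty (same example: $z=t$, $P=\{f:f(0)<0\}$), so its closure cannot contain $x\wedge y$. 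A repair requires choosing $z\in E^{+}$ to dominate the regulator of the sequence $p_n\in P$ converging to $x\wedge y$ as well, and truncating only from above, $q_n=p_n\wedge kz$, which stays in $P$ by downward closedness, lands in $E_z$, and converges with regulator $z$; none of this is in your sketch. The paper instead argues directly: by its Lemma it picks $x_n,y_n\in E$ increasing relatively uniformly to $x,y$, majorizes $x\wedge y$ by an element $t\in P$, concludes $x_n\wedge y_n\le t$ hence $x_n\wedge y_n\in P$, applies primeness of $P$, and passes to the limit along a subsequence.
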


\begin{proof}
First we prove that $P^{ru}$ is a proper set of $E^{ru}$. Suppose that
$P^{ru}=E^{ru}$ then $E\subset P^{ru}$ on the other hand for every $x\in
E\subset P^{ru}$ there exists $y\in P$ such that $x\leq y$ (because $P$ is
\textit{majorizing} $P^{ru}$) which implies that $x\in P$ as $P$ is an ideal
in $E$ which implies that $E\subset P$ then $P=E$. This contradict with $P$
proper. Consequently $P^{ru}$ is a proper set of $E^{ru}$. Secondly it is not
hard to prove that $P^{ru}$ is a lattice ideal of $E^{ru}$. It remains to show
the prime property of $P^{ru}$. To do this let $x,y\in E^{ru}$ such that
$x\wedge y\in P^{ru}$ we claim that $x\in P^{ru}$ or $y\in P^{ru}$. By using
the preceding Lemma there exist two sequences $(x_{n})_{n}$,$(y_{n})_{n}\in E$
such that $x_{n}\nearrow x(r.u)$ and $y_{n}\nearrow y(r.u)$ then $x_{n}\wedge
y_{n}\nearrow x\wedge y$. Now by using the majorizing property of $P$ in
$P^{ru}$ there exists $z\in P$ such that $x\wedge y\leq z$ and thus
$x_{n}\wedge y_{n}\leq x\wedge y\leq z$ then $x_{n}\wedge y_{n}\leq z$.
Therefore $x_{n}\wedge y_{n}\in P$ which implies that $x_{n}\in P$ or
$y_{n}\in P$. Thus $x\in P^{ru}$ or $y\in P^{ru}$ and we are done.
\end{proof}

The main result of this paper is strongly based on the following proposition.

\begin{proposition}
Let $E$ be a uniformly dense Riesz subspace of $C(X)$ where $X$ is a compact
Haudorff space and $P$ a prime ideal. Then $P$ and $P^{ru}$ are associated
with the same point $x\in X$.
\end{proposition}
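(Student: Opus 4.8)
The plan is to reduce the statement to Kaplansky's description of prime ideals of $C(X)$, applied to the completed ideal $P^{ru}$, and then to transfer the associated-point property back down to $P$.

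First I would observe that $E^{ru}=C(X)$: the space $C(X)$ is uniformly complete and, by hypothesis, $E$ is uniformly dense in it, so $C(X)$ is (after the usual identifications) the uniform completion of $E$. By the preceding Proposition, $P^{ru}$ is then a proper prime ideal of $C(X)$, and since $X$ is compact Hausdorff, Kaplansky's theorem [4] attaches to it a unique point $x\in X$ with which it is associated, i.e. $g\in P^{ru}$ whenever $f\in P^{ru}$, $g\in C(X)$ and $g(x)<f(x)$. The goal is then to show that this same $x$ works for $P$.

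The one genuinely new ingredient needed is the identity $P=P^{ru}\cap E$. The inclusion $P\subseteq P^{ru}\cap E$ is immediate, since $P\subseteq E$ and $P$ sits inside $P^{ru}$. For the reverse, given $h\in P^{ru}\cap E$, the fact that $P$ is majorizing in $P^{ru}$ yields some $z\in P$ with $h\le z$, and since $P$, being an ideal of $E$, contains with each of its elements all smaller ones, we get $h\in P$.

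With this in hand the conclusion is short: if $f,g\in E$ with $f\in P$ and $g(x)<f(x)$, then $f\in P^{ru}$ and $g\in C(X)=E^{ru}$, so the association of $P^{ru}$ with $x$ gives $g\in P^{ru}$, whence $g\in P^{ru}\cap E=P$; thus $P$ is associated with $x$ as well. The only steps requiring attention are the two identifications $E^{ru}=C(X)$ and $P=P^{ru}\cap E$, and both follow painlessly from the preceding Lemma and Proposition, so I do not expect a serious obstacle — the real content of the statement lives in the results already established upstream.
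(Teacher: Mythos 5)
Your proposal is correct and follows essentially the same route as the paper: identify $E^{ru}=C(X)$, use the preceding proposition and Kaplansky's theorem to get the point $x$ associated with $P^{ru}$, and then pull the property down to $P$ via the majorizing property of $P$ in $P^{ru}$ together with the downward closedness of the ideal $P$. Your intermediate identity $P=P^{ru}\cap E$ is just a repackaging of the inline argument the paper gives (``$g\in P^{ru}$, so $g\leq h$ for some $h\in P$, hence $g\in P$''), so there is no substantive difference.
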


\begin{proof}
By the preceding proposition $P^{ru}$ is a prime ideal of $E^{ru}=C(X)$ then
from [4] there exist $x\in X$ such that $P^{ru}$ is associated with $x$. We
claim that $P$ is also associated with $x$. To see this, let $g\in E$ and
$f\in P$ such that $g(x)<f(x)$ we claim that $g\in P$. By the fact that
$E\subset E^{ru}$ and $P\subset P^{ru}$, we have $g\in E^{ru}$, $f\in P^{ru}$
so by $g(x)<f(x)$ we have $g\in P^{ru}$ (because $P^{ru}$ is associated with
$x$) then there exists $h\in P$ such that $g\leq h$ which implies $g\in P$
(because $P$ is an ideal of $E$). Which is the desired result.
\end{proof}

We arrive at this point to a one of the central results of this paper in which
we provide a representation of every real lattice homomorphism on a uniformly
dense Riesz subspace of $C(X)$ as evaluation at some point of $X$. We note
that the proof of the corresponding result in the case of $C(X)$ (see [7]) is
heavily based on the structure of $X$ and hence it can not be expected that it
can be adapted directly to the general Riesz space case. The complications are
essentially due to the topological poverty of the domain. Our curiosity about
this problem was in part initiated by the fact that after reading clearly the
details of proofs of aforementioned results we were certain that with similar
techniques we can do better. A crucial piece of informations that we need from
the richness of the uniform completion structure are came from its one of
prime ideals proved in last results. We give a generalization of results in
[2,7] to uniformly dense Riesz subspaces of $C(X)$ by adding some facts to
Mena Roth's proofs. To clarify our contribution we give a details proofs for
the two following results.

\begin{proposition}
Let $E$ be a uniformly dense Riesz subspace of $C(X)$ where $X$ is a compact
Haudorff space such that $1\in E$. Let $\phi:E\rightarrow%
\mathbb{R}
$ be a lattice homomorphism satisfying $\phi(\lambda)=\lambda$ for all
$\lambda\in%
\mathbb{R}
$. Then there exist $x\in X$ such that $\phi=\delta_{x}$, the point evaluation
map defined for $f\in C(X)$ by $\delta_{x}(f)=f(x)$.
\end{proposition}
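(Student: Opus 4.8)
The plan is to locate the point $x\in X$ by first associating a prime ideal to $\phi$, then invoking the two previous propositions to transfer the ``associated point'' property from $C(X)$ down to $E$, and finally squeezing $\phi(f)=f(x)$ out of the lattice-homomorphism identities. First I would set $P=\{f\in E: \phi(|f|)=0\}$, or more precisely the kernel-type set $P=\{f\in E: \phi(f^{+})=0\}$; using that $\phi$ is a lattice homomorphism and $\phi(\lambda)=\lambda$, one checks that $P$ is a prime ideal of $E$ in the sense defined above (solidity follows since $0\le |g|\le |f|$ forces $\phi(|g|)\le\phi(|f|)$; primeness follows from $\phi(x\wedge y)=\phi(x)\wedge\phi(y)$, so if $x\wedge y\in P$ then $\phi(x)\wedge\phi(y)\le 0$, hence one of them is $\le 0$, and a little care with positive/negative parts finishes it). The fact that $\phi(1)=1\neq 0$ shows $1\notin P$, so $P$ is proper.

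Next I would apply Proposition 3 (the ``same point'' proposition): since $E$ is a uniformly dense Riesz subspace of $C(X)$ and $P$ is a prime ideal of $E$, the ideal $P^{ru}$ is a prime ideal of $C(X)$, and by Kaplansky's theorem [4] it is associated with a unique point $x\in X$; moreover $P$ itself is associated with that same $x$. Concretely, ``$P$ associated with $x$'' means: for $g\in E$ and $f\in P$, if $g(x)<f(x)$ then $g\in P$.

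With $x$ in hand, the final step is to show $\phi(f)=f(x)$ for all $f\in E$. Fix $f\in E$ and let $\lambda=f(x)$. Consider $g=(f-\lambda)^{+}\in E$ and $h=(f-\lambda)^{-}\in E$; both vanish at $x$, i.e. $g(x)=h(x)=0$. I would play these off against a member of $P$: note that $(f-\lambda)^{+}\wedge\varepsilon\cdot 1$ and similar truncations, together with the associated-point property of $P$, force $\phi((f-\lambda)^{+})=0$ and $\phi((f-\lambda)^{-})=0$ — the point being that a function vanishing at $x$ and dominated near $x$ by something in $P$ must itself lie in $P$. Once $\phi((f-\lambda)^{+})=\phi((f-\lambda)^{-})=0$, the lattice-homomorphism property gives $\phi(f-\lambda)=\phi((f-\lambda)^{+})-\phi((f-\lambda)^{-})$ only after we know $\phi$ respects this decomposition; more safely, from $\phi(f\vee\lambda)=\phi(f)\vee\lambda$ and $\phi(f\wedge\lambda)=\phi(f)\wedge\lambda$ together with $(f\vee\lambda)(x)=(f\wedge\lambda)(x)=\lambda$, one deduces $\phi(f)\vee\lambda=\lambda$ and $\phi(f)\wedge\lambda=\lambda$, hence $\phi(f)=\lambda=f(x)$.

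The main obstacle will be the last step: rigorously producing, for a function vanishing at $x$, a genuine element of $P$ that dominates it in the neighbourhood-sense required by the associated-point definition, so that the definition actually applies. This is where uniform density of $E$ in $C(X)$ must be used — approximating bump-type functions in $C(X)$ by elements of $E$ — and it is the delicate interface between the purely algebraic prime-ideal machinery and the topology of $X$. Getting the truncation/approximation argument to interact correctly with ``$P$ associated with $x$'' (which only talks about pointwise comparison at the single point $x$) is the technical heart of the proof.
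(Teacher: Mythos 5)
Your first two steps (building a prime ideal from $\phi$ and using the two preceding propositions to get a point $x$ with which both the ideal and its uniform completion are associated) are sound and run parallel to the paper, whose ideals $\phi^{-1}(I_a^-)$ with $I_a^-=\{r\in\mathbb{R}:r\le a\}$ include your $P=\{f:\phi(f)\le 0\}$ as the case $a=0$. The genuine gap is in the final step, and the route you sketch cannot be repaired as stated: to put a function $g\ge 0$ with $g(x)=0$ into $P$ via the associated-point property you would need some $h\in P$ with $h(x)>g(x)=0$, but no such $h$ can exist. Indeed, if $h\in P$ and $h(x)>0$, then the constant $c=h(x)/2$ satisfies $c<h(x)$, so the associated-point property forces $c\in P$, i.e.\ $\phi(c)=c\le 0$, a contradiction. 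So the single ideal $P$, together with its associated point, simply does not see functions vanishing at $x$; no amount of bump-function approximation using uniform density will change that, because the associated-point condition only compares values at the single point $x$. Your ``more safely'' alternative is also circular: to conclude $\phi(f)\vee\lambda=\lambda$ you need $\phi(f\vee\lambda)=\lambda$, which is an instance of the very identity $\phi=\delta_x$ being proved.

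The missing idea is to use the whole chain $\{\phi^{-1}(I_a^-):a\in\mathbb{R}\}$ rather than one ideal. Passing to uniform completions (Proposition 1) gives a chain of prime ideals in $C(X)$, and Kaplansky's theorem for chains guarantees they are all associated with one and the same point $x$; Proposition 2 then transfers this to each $\phi^{-1}(I_a^-)$ itself. The squeeze is then done with constants, which lie in $E$ since $1\in E$ and which $\phi$ fixes: for $r<f(x)$, since $f\in\phi^{-1}(I_{\phi(f)}^-)$ and $r<f(x)$, the constant $r$ lies in $\phi^{-1}(I_{\phi(f)}^-)$, so $r\le\phi(f)$; for $r>f(x)$, since the constant $r$ lies in $\phi^{-1}(I_r^-)$ and $f(x)<r$, we get $f\in\phi^{-1}(I_r^-)$, so $\phi(f)\le r$. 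Letting $r\to f(x)$ from both sides gives $\phi(f)=f(x)$, with no further use of density or truncations. You should replace your single-ideal final step by this chain argument (or equivalently, prove that the same $x$ works for every translate $\phi^{-1}(I_a^-)$), since that is exactly what makes the pointwise comparison at $x$ strong enough to pin down $\phi(f)$.
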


\begin{proof}
Let $I_{a}^{-}=\left\{  r\in%
\mathbb{R}
:r\leq a\right\}  $ for $a\in%
\mathbb{R}
$. As it is mentioned in [7] $\left\{  I_{a}^{-}:a\in%
\mathbb{R}
\right\}  $ and $\left\{  \phi^{-1}(I_{a}^{-}):a\in%
\mathbb{R}
\right\}  $ are chains of prime ideals in $%
\mathbb{R}
$ and $E$, respectively$.$ Now by using proposition 1 $\left\{  (\phi
^{-1}(I_{a}^{-}))^{ru}:a\in%
\mathbb{R}
\right\}  $ is a chain of prime ideals in $E^{ru}=C(X)$. Then by the preceding
proposition $(\phi^{-1}(I_{a}^{-}))^{ru}$ and $(\phi^{-1}(I_{a}^{-}))$ are
associated with the same point $x\in X$ for all $a\in%
\mathbb{R}
.$ We claim that $\phi=\delta_{x}$. To do this let $f\in E$. By using the fact
that $f\in\phi^{-1}(I_{\phi(f)}^{-})$ and $x$ is an associated point for
$\phi^{-1}(I_{\phi(f)}^{-})$ we have $r\in\phi^{-1}(I_{\phi(f)}^{-})$ for
every $r<f(x)$. It follows that $r\leq\phi(f)$ for every $r<f(x)$ which
implies that $f(x)\leq\phi(f)$ (by limits if $r\rightarrow f(x)$). Also by the
fact that $r\in\phi^{-1}(I_{r}^{-})$ and $x$ is an associated point for
$\phi^{-1}(I_{r}^{-})$ we have $f\in\phi^{-1}(I_{r}^{-})$ for every $r>f(x)$.
It follows that $\phi(f)\leq r$ for every $f(x)<r$ which implies that
$\phi(f)\leq f(x)$ (by limits if $r\rightarrow f(x)$). Therefore
$\phi(f)=f(x)=\delta_{x}(f)$ for every $f\in E$ which implies that
$\phi=\delta_{x}$.
\end{proof}

Now we can give a generalization of Mena and Roth, thanh, Lochan and Strauss,
Ercan and Wickstead's approaches [2,5,7,9] for Riesz spaces with strong order
unit as follows:

\begin{theorem}
Let $E$ be a Riesz space with strong order unit $e$ and let $F$ be a Riesz
space. If $T$ is a lattice homomorphism from $E$ into $F$ such that $T(\lambda
e)=\lambda T(e)$ for each $\lambda\in%
\mathbb{R}
$ then $T$ is linear (Riesz homomorphism).
\end{theorem}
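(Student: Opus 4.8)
The strategy is to reduce to the case $F = \mathbb{R}$ via a separating family of real-valued lattice homomorphisms, and then apply Proposition 4 on a suitable $C(X)$-representation of $E$. First I would normalize: without loss of generality $T(e) > 0$ (if $T(e) = 0$, then since $e$ is a strong order unit, for every $x \in E$ we have $|x| \le n e$ for some $n$, whence $|Tx| = T|x| \le n\,T(e) = 0$, so $T = 0$ is trivially linear; if $T(e)$ is not positive, a short argument using $T(e) = T(e\vee 0) = T(e)\vee 0$ shows $T(e)\ge 0$ anyway). Next, since $e$ is a strong order unit, the principal ideal it generates is all of $E$, so by the Kakutani representation theorem $E^{ru}$ is Riesz isomorphic to $C(X)$ for a compact Hausdorff space $X$, with $e$ corresponding to the constant function $1$; under this identification $E$ is a uniformly dense Riesz subspace of $C(X)$ containing $1$.

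Now consider the range. The image $T(E)$ lies in the principal ideal $F_{T(e)}$ generated by $T(e)$ in $F$ (again because $|x|\le ne$ forces $|Tx|\le n\,T(e)$). I would fix an arbitrary Riesz homomorphism — equivalently, by Kakutani applied to $F_{T(e)}$ together with its representation, an arbitrary evaluation — but more directly: it suffices to show that for each $\psi$ in a point-separating family of positive real lattice homomorphisms on $F_{T(e)}$ that send $T(e)$ to $1$, the composite $\phi := \psi \circ T : E \to \mathbb{R}$ is linear. Such a family exists because $F_{T(e)}$ embeds uniformly densely into its own $C(Y)$ and the evaluations $\delta_y$ separate points of $C(Y)$, hence of $F_{T(e)}$; each $\delta_y$ restricted to $F_{T(e)}$ is a real lattice homomorphism with $\delta_y(T(e)) = 1$. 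The composite $\phi = \psi\circ T$ is then a lattice homomorphism $E \to \mathbb{R}$ (composition of lattice homomorphisms is a lattice homomorphism) satisfying $\phi(\lambda e) = \psi(\lambda T(e)) = \lambda\,\psi(T(e)) = \lambda$ for all $\lambda \in \mathbb{R}$.

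At this point Proposition 4 applies verbatim to $\phi$ on the uniformly dense subspace $E \subseteq C(X)$: there is a point $x \in X$ with $\phi = \delta_x$, in particular $\phi$ is linear. Since this holds for every $\psi$ in the separating family, linearity of $T$ follows: for $x,y\in E$ and scalars $\alpha,\beta$, we get $\psi\big(T(\alpha x + \beta y) - \alpha Tx - \beta Ty\big) = \phi(\alpha x+\beta y) - \alpha\phi(x) - \beta\phi(y) = 0$ for all such $\psi$, and since these $\psi$ separate the points of $F_{T(e)}$ (which contains the displayed element), we conclude $T(\alpha x + \beta y) = \alpha Tx + \beta Ty$. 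A lattice homomorphism that is linear is by definition a Riesz homomorphism, completing the proof.

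The main obstacle I anticipate is the passage from "each $\psi \circ T$ is linear" back to "$T$ is linear" — i.e., securing enough real lattice homomorphisms on the codomain to separate points. This requires knowing that $T(E)$ sits inside a principal ideal of $F$ (which is exactly where the \emph{strong} order unit hypothesis on $E$ is used, via $|x| \le ne$), and then invoking Kakutani on that principal ideal to produce the separating evaluations; one must also check the routine but essential facts that a composition of lattice homomorphisms is a lattice homomorphism and that $\psi$ carries $T(e)$ to a strictly positive scalar so it can be renormalized to $1$. Everything else is bookkeeping once Proposition 4 is in hand.
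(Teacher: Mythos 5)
Your proposal is correct and follows essentially the same route as the paper: reduce to the real-valued case by composing $T$ with the point evaluations $\delta_y$ on the Kakutani representation $C(Y)$ of $(F_{T(e)})^{ru}$, apply the evaluation proposition to each $\delta_y\circ T$ on $E\subseteq C(X)$, and recover linearity of $T$ from the resulting pointwise identities. The only differences are cosmetic: you spell out the separation step that the paper compresses into ``$T(f)=f\circ\varphi$, hence $T$ is linear,'' and you treat the trivial case $T(e)=0$ explicitly (where, incidentally, your aside $|Tx|=T|x|$ is not yet justified for a nonlinear lattice homomorphism, though the bound $-nT(e)\leq Tx\leq nT(e)$ you also cite suffices).
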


\begin{proof}
First we claim that $T(E)\subset F_{T(e)}$, the principal ideal generated by
$T(e)$ in $F$. Take $y\in T(E)$, then there exists $x\in E$ such that
$y=T(x)$. Now, from the fact that $e$ is a strong order unit there exists
$\lambda\in%
\mathbb{R}
^{+}$ such that $-\lambda e\leq x\leq\lambda e$ since $T$ is an increasing map
we obtain $T(-\lambda e)=-\lambda T(e)\leq T(x)\leq T(\lambda e)=\lambda
T(e)$. Finally, $\left\vert T(x)\right\vert \leq\lambda T(e)$ thus $T(x)\in
F_{T(e)}$ which implies that $T(E)\subset F_{T(e)}$. On the other hand, we can
regard $T$ as a map from $E$ into $F_{T(e)}^{ru}$, the uniform completion of
$F_{T(e)}$. According to the Kakutani theorem [8,Theorem 2.1.3] there exist
two compact Hausdorff spaces $X$ and $Y$ such that $E^{ru}$ and $(F_{T(e)}%
)^{ru}$ can be identified with $C(X)$ and $C(Y)$, respectively$.$ Now let
$y\in Y$ then the map $\delta_{y}\circ T:E\rightarrow%
\mathbb{R}
$ defined by $\delta_{y}\circ T(f)=T(f)(y)$ for all $f\in E$ is a lattice
homomorphism fixing the constants and thus by the above proposition there
exists a unique $x\in X$ with
\[
\delta_{y}\circ T=\delta_{x}%
\]
which implies that $T(f)(y)=f(x)$ for all $f\in E$. Noting $y=\varphi(x)$
thus
\[
T(f)=f\circ\varphi\text{.}%
\]
Consequently $T$ is linear which gives the desired result.
\end{proof}

In the next result we prove that the condition of the Dedekind $\sigma
$-completeness in Ercan and Wickstead result's [2, corollary 8] can be removed
as follows:

\begin{theorem}
Let $E$ and $F$ be Riesz spaces. If $T$ is a lattice homomorphism from $E$
into $F$ such that $T(\lambda x)=\lambda T(x)$ for each $\lambda\in%
\mathbb{R}
$ and $x\in E$ then $T$ is linear (Riesz homomorphism).
\end{theorem}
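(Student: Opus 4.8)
The plan is to deduce this from Theorem 5 by localization: although $E$ need not possess an order unit, any two of its elements lie together inside a principal ideal, which does carry a strong order unit, and on that ideal $T$ satisfies exactly the hypotheses of the previous theorem.

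First I would fix arbitrary $x,y\in E$ and put $e:=|x|\vee|y|\in E^{+}$. Let $E_{e}$ be the principal ideal generated by $e$ in $E$. Then $E_{e}$ is an (Archimedean) Riesz space in its own right, $e$ is a strong order unit of $E_{e}$, and $E_{e}$ is a Riesz subspace of $E$; hence the restriction $S:=T|_{E_{e}}:E_{e}\rightarrow F$ is again a lattice homomorphism. Applying the homogeneity hypothesis $T(\lambda z)=\lambda T(z)$ at the particular element $z=e$ gives $S(\lambda e)=T(\lambda e)=\lambda T(e)=\lambda S(e)$ for every $\lambda\in\mathbb{R}$. Thus $S$ meets all the assumptions of Theorem 5, so $S$ is linear. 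Since $x,y,x+y\in E_{e}$, this yields $T(x+y)=S(x+y)=S(x)+S(y)=T(x)+T(y)$.

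As $x,y$ were arbitrary, $T$ is additive; combined with the assumed $\mathbb{R}$-homogeneity (which in particular, at $\lambda=0$, forces $T(0)=0$) this shows $T$ is linear. A linear lattice homomorphism is by definition a Riesz homomorphism, which finishes the argument.

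I do not expect a genuine obstacle here: the whole weight of the proof rests on Theorem 5, and the only point meriting care is checking that the hypotheses really transfer to the restriction $S$ — namely that $E_{e}$ is a sublattice of $E$ (so being a lattice homomorphism is inherited) and that the scaling condition at $e$ is available, which it is precisely because the hypothesis of the theorem is stated for all $x\in E$ rather than for one distinguished vector. It is worth noting that this formulation is formally stronger than Theorem 5, but the localization shows the two statements are in fact equivalent.
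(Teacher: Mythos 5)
Your proposal is correct and follows essentially the same route as the paper: restrict $T$ to the principal ideal containing $x$ and $y$, where the hypothesis of the strong-order-unit theorem is satisfied, and deduce additivity. The only (immaterial) difference is that you take $e=|x|\vee|y|$ while the paper takes $e=|x|+|y|$; these generate the same principal ideal, so the argument is unchanged.
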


\begin{proof}
It is suffices to prove that $T$ is additive. To do this let $x,y\in E$ and
put $e=\left\vert x\right\vert +\left\vert y\right\vert $. Denote by $E_{e}$
the principal ideal generated by $e$ in $E$ and $T_{e}$ the restriction of $T$
to $E_{e}$. Therefore $E_{e}$ is a Riesz space with strong order unit $e$ and
$T_{e}:E_{e}\rightarrow F$ is a lattice homomorphism which satisfies
$T_{e}(\lambda e)=\lambda T_{e}(e)$. According to the preceding Theorem
$T_{e}$ is linear. Since $x,y,x+y\in E_{e}$ it follows that $T(x+y)=T_{e}%
(x+y)=T_{e}(x)+T_{e}(y)=T(x)+T(y)$, which gives the desired result.
\end{proof}

Now we are able to announce the main result of this paper in which we give a
Riesz space version of Mena and Roth, thanh, Lochan and Strauss, Ercan and
Wickstead's approaches [2,5,7,9] as follows. For the proof we use a similar
techniques that proves lemma 1 in [2] with some addition and more details.

\begin{theorem}
Let $E$ and $F$ be a Riesz spaces with weak order units $e$ and $f$,
respectively. If $T$ is a lattice homomorphism from $E$ into $F$ satisfying
$T(\lambda e)=\lambda f$ for each $\lambda\in%
\mathbb{R}
$ then $T$ is a Riesz homomorphism (linear).
\end{theorem}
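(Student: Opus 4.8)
The plan is to reduce the general weak-order-unit case to the strong-order-unit case already handled in the preceding theorem, by localizing to a suitable principal ideal. Fix $x,y\in E$; as in the proof of Theorem 7 it suffices to prove additivity, so it is enough to show $T(x+y)=T(x)+T(y)$ for each such pair. First I would replace $E$ by the principal ideal $E_{u}$ generated by $u=|x|+|y|+|e|$; this is a Riesz space with strong order unit $u$, and it contains $x$, $y$, $x+y$ and $e$. The restriction $T_{u}=T|_{E_{u}}$ is still a lattice homomorphism into $F$, but now the normalizing hypothesis is $T_{u}(\lambda e)=\lambda f$ rather than a statement about $\lambda u$, so Theorem 6 does not apply verbatim.

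To get around this I would use the representation machinery directly, imitating the proof of Theorem 6. Since $E_{u}$ has strong order unit $u$, one shows $T(E_{u})\subset F_{T(u)}$ exactly as there: $|z|\le\lambda u$ gives $|T(z)|\le\lambda T(u)$. Passing to uniform completions, $(E_{u})^{ru}\cong C(X)$ and $(F_{T(u)})^{ru}\cong C(Y)$ for compact Hausdorff $X,Y$ via Kakutani, with $u$ corresponding to the constant function $1$ on $X$. For each $y_{0}\in Y$ the map $\delta_{y_{0}}\circ T:E_{u}\to\mathbb{R}$ is a lattice homomorphism; the key point is that it \emph{fixes the constants}, i.e. $\delta_{y_{0}}\circ T(\lambda u)=\lambda\cdot(\delta_{y_{0}}\circ T)(u)$, so after dividing by the scalar $c(y_{0})=(\delta_{y_{0}}\circ T)(u)=T(u)(y_{0})$ (on the set where it is nonzero) Proposition 5 yields a point $\sigma(y_{0})\in X$ with $T(g)(y_{0})=c(y_{0})\,g(\sigma(y_{0}))$ for all $g\in E_{u}$. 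Thus $T$ restricted to $E_{u}$ is, in this representation, multiplication by the fixed function $c\in C(Y)$ composed with the substitution operator $g\mapsto g\circ\sigma$; in particular it is linear on $E_{u}$, hence $T(x+y)=T(x)+T(y)$.

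The main obstacle is handling the points $y_{0}\in Y$ where $c(y_{0})=T(u)(y_{0})$ vanishes, since there $\delta_{y_{0}}\circ T$ is the zero homomorphism and Proposition 5 gives no point. Here one has to argue that on that closed set $Z=\{y_{0}:T(u)(y_{0})=0\}$ the value $T(g)(y_{0})$ is forced to be $0$ for every $g\in E_{u}$: indeed $|g|\le\lambda u$ implies $|T(g)(y_{0})|\le\lambda T(u)(y_{0})=0$, so $T(g)$ vanishes on $Z$ automatically, and the formula $T(g)(y_{0})=c(y_{0})g(\sigma(y_{0}))$ holds trivially there with $\sigma(y_{0})$ chosen arbitrarily. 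A secondary technical point is measurability/continuity of $\sigma$ and $c$, but we do not actually need continuity of $\sigma$: linearity of $T$ on $E_{u}$ follows pointwise in $y_{0}\in Y$ from the formula $T(g)(y_{0})=c(y_{0})g(\sigma(y_{0}))$, and since $x,y,x+y$ all lie in $E_{u}$ we conclude $T(x+y)=T(x)+T(y)$. As $x,y\in E$ were arbitrary, $T$ is additive, and together with the homogeneity hypothesis $T(\lambda e)=\lambda f$ extended via the same ideal argument to $T(\lambda z)=\lambda T(z)$ for all $z\in E$, this shows $T$ is linear, i.e. a Riesz homomorphism.
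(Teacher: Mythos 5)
There is a genuine gap at the heart of your argument. You correctly observe at the outset that for $u=|x|+|y|+|e|$ the normalizing hypothesis concerns $\lambda e$, not $\lambda u$, so the strong-unit theorem does not apply to $T|_{E_u}$ verbatim — but your workaround then quietly assumes exactly the missing normalization. The ``key point'' that $\delta_{y_0}\circ T$ fixes the constants, i.e.\ $\delta_{y_0}\circ T(\lambda u)=\lambda\,(\delta_{y_0}\circ T)(u)$, is equivalent to $T(\lambda u)=\lambda T(u)$, which is not a hypothesis ($u$ is in general not a multiple of $e$) and is in substance a piece of the linearity you are trying to prove. The same unproved identity is also what you use earlier to get $|T(z)|\le\lambda T(u)$ from $|z|\le\lambda u$, hence $T(E_u)\subset F_{T(u)}$; without it, monotonicity only gives $T(-\lambda u)\le T(z)\le T(\lambda u)$, which does not place $T(z)$ in the principal ideal of $T(u)$. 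Note also that if you did know $T(\lambda u)=\lambda T(u)$, the strong-unit theorem would apply directly to $E_u$ and the whole representation detour (the function $c$, the map $\sigma$, the zero set of $T(u)$) would be unnecessary; so the argument is circular precisely where it needs to be new. Finally, your closing remark that homogeneity ``extends via the same ideal argument'' to all of $E$ rests on the same flawed step, and without it additivity alone does not yield linearity.

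The paper avoids this trap by localizing only to $E_e$, where $e$ itself is the strong order unit and the hypothesis $T(\lambda e)=\lambda f$ is exactly the normalization the strong-unit theorem requires; thus $T_e=T|_{E_e}$ is linear. The passage from $E_e$ to all of $E$ then uses the weak-unit property of $f=T(e)$ in $F$: for $x\ge 0$, $T(x)=\sup_n\bigl(T(x)\wedge nT(e)\bigr)=\sup_n T_e(x\wedge ne)$, and a two-sided estimate (using $(x+y)\wedge ne\le x\wedge ne+y\wedge ne$ and $x\wedge ne+y\wedge me\le x+y$) gives additivity on $E^{+}$; one then extends by Kantorovich and checks $T(-x)=-T(x)$ for $x\in E^{+}$ to conclude $T(x)=T(x^{+})-T(x^{-})$ is linear. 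If you want to salvage your approach, you would have to carry out some such argument for arbitrary positive elements anyway, since no principal ideal containing both $x$ and $e$ comes equipped with the needed normalization at its own unit.
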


\begin{proof}
Let $E_{e}$ and $F_{f}$ be the principal order ideals generated by $e$ in $E$
and $f$ in $F$, respectively. As proved in theorem 1 we have $T(E_{e})\subset
F_{f}$. Denote $T_{e}$ the restriction of $T$ to $E_{e}$, that is $T_{e}%
:E_{e}\rightarrow F_{f}$ defined by $T_{e}(x)=T(x)$. By applying theorem 1
$T_{e}$ is a Riesz homomorphism (linear). On the other hand $T(e)=f$ is a weak
order unit of $F$ so for every $x\geq0$ ($T(x)\geq0$ because $T$ is positive)
we have
\[
T(x)=\sup(T(x)\wedge nT(e))=\sup(T(x\wedge ne))=\sup(T_{e}((x\wedge ne)).
\]
We claim that $T$ is additive in $E^{+}$. Let $x,y\in E^{+}$, then the
inequality $(x+y)\wedge ne\leq x\wedge ne+y\wedge ne$ for all $n\in%
\mathbb{N}
$ shows that
\[
T_{e}((x+y)\wedge ne)\leq T_{e}(x\wedge ne+y\wedge ne)=T_{e}(x\wedge
ne)+T_{e}(y\wedge ne)\leq T(x)+T(y)
\]
and hence
\[
T(x+y)\leq T(x)+T(y)\text{.}%
\]
On the other hand, the inequality $x\wedge ne+y\wedge me\leq x+y$ for all
$n,m\in%
\mathbb{N}
$ implies that
\[
T(x\wedge ne+y\wedge me)=T_{e}(x\wedge ne+y\wedge me)=T_{e}(x\wedge
ne)+T_{e}(y\wedge me)\leq T(x+y)\text{.}%
\]
Now by taking supremum on $n,m\in%
\mathbb{N}
$ we obtain
\[
T(x)+T(y)\leq T(x+y)
\]
therefore
\[
T(x+y)=T(x)+T(y)
\]
for all $x,y\in E^{+}$. Let $S$ be the restriction of $T$ to $E^{+}$. From the
Kantorovich theorem (see [1, Theorem 1.7]) $S$ extends uniquely to a positive
operator $S$ from $E$ into $F$ by putting
\[
S(x)=S(x^{+})-S(x^{-})=T(x^{+})-T(x^{-})
\]
for all $x\in E$. Next, we claim that $T(-x)=-T(x)$ for all $x\in E^{+}$.
Since $-T(-x)\geq0$ (because $T$ is positive) we have
\begin{align*}
-T(-x)  &  =\sup((-T(-x))\wedge nT(e))=\sup(-(T(-x)\vee(-ne))\\
&  =\sup(-(T_{e}(-x)\vee(-ne))=\sup(T_{e}(-((-x)\vee(-ne)))\\
&  =\sup(T_{e}(x\wedge ne))=T(x)\text{.}%
\end{align*}
for all $x\in E^{+}.$ Consequently,
\[
T(x)^{-}=(-T(x))\vee0=-T(x\wedge0)=-T(-x^{-})=T(x^{-})
\]
for all $x\in E$. Hence
\[
T(x)=T(x)^{+}-T(x)^{-}=T(x^{+})-T(x^{-})=S(x).
\]
Which implies that $T$ is linear and so $T$ is a Riesz homomorphism and the
proof is finished.
\end{proof}

As an immediate application to the above theorem we give the following result.

\begin{corollary}
Let $E$ and $F$ be Riesz spaces with a weak order unit $e$ of $E$. If $T$ is a
lattice isomorphism from $E$ into $F$ such that $T(\lambda e)=\lambda T(e)$
for each $\lambda\in%
\mathbb{R}
$ then $T$ is a Riesz isomorphism (linear).
\end{corollary}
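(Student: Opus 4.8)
The plan is to reduce the statement to Theorem~3, which already gives linearity for lattice homomorphisms between Riesz spaces with weak order units that rescale a fixed weak order unit correctly. The subtlety here is that $T$ is a lattice isomorphism satisfying $T(\lambda e)=\lambda T(e)$, but Theorem~3 is phrased for maps sending $\lambda e$ to $\lambda f$ where $f$ is a \emph{weak order unit of the codomain}. So the first thing I would check is that $T(e)$ is a weak order unit of $F$ (or at least of the relevant Riesz subspace). Since $T$ is a lattice isomorphism onto its range $T(E)$, and $e$ is a weak order unit of $E$, the image $T(e)$ is a weak order unit of $T(E)$: indeed if $y\in T(E)^{+}$ with $y\wedge T(e)=0$, writing $y=T(x)$ with $x\in E^{+}$ (using that $T$ is a lattice isomorphism, so $x$ can be taken positive), we get $T(x\wedge e)=T(x)\wedge T(e)=0$, hence $x\wedge e=0$ by injectivity, hence $x=0$ since $e$ is a weak order unit of $E$, hence $y=0$.

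Next I would view $T$ as a lattice homomorphism from $E$ into the Riesz subspace $G:=T(E)$ of $F$; by the previous paragraph $f':=T(e)$ is a weak order unit of $G$, and the hypothesis reads $T(\lambda e)=\lambda f'$ for all $\lambda\in\mathbb{R}$. Now Theorem~3 applies verbatim to $T:E\to G$ with weak order units $e$ and $f'$, yielding that $T$ is linear (a Riesz homomorphism). Linearity of $T$ as a map into $G$ is the same as linearity as a map into $F$, since $G$ is a vector subspace of $F$ and the algebraic operations agree. Being a bijective linear lattice homomorphism, $T$ is then a Riesz isomorphism onto $G$, which is exactly the assertion (``Riesz isomorphism from $E$ into $F$'').

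The main obstacle — really the only point requiring care — is the verification that $T(e)$ is a weak order unit of $T(E)$ and the observation that it suffices to work inside $T(E)$ rather than $F$. Once that is settled the corollary is an immediate consequence of Theorem~3. One should also note that ``lattice isomorphism'' here must mean an injective lattice homomorphism (equivalently a lattice homomorphism that is a bijection onto its range with lattice-homomorphism inverse), which is what makes the pull-back of the weak-order-unit property work; I would state this explicitly at the start of the argument.
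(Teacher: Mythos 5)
Your reduction breaks down at the step ``view $T$ as a lattice homomorphism from $E$ into the Riesz subspace $G:=T(E)$''. Before linearity of $T$ is established, $T(E)$ is only a sublattice of $F$ (closed under $\vee$ and $\wedge$), not a vector subspace, so it is not a Riesz space and Theorem 3 cannot be applied with $G$ as codomain: both the statement and the proof of Theorem 3 require the codomain to be a Riesz space in which $T(e)$ is a weak order unit, since the argument computes suprema such as $T(x)=\sup_n T(x\wedge ne)$ and sums such as $T_{e}(x\wedge ne)+T_{e}(y\wedge me)$ inside that codomain. Arguing that $T(E)$ is a vector subspace would be circular, as that is essentially the linearity being proved. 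The paper sidesteps this by reading ``lattice isomorphism from $E$ into $F$'' as onto $F$: for an arbitrary $0\leq y\in F$ it chooses a positive preimage $x$, deduces from $T(x\wedge e)=y\wedge T(e)=0$ that $x=0$ and hence $y=0$, so $T(e)$ is a weak order unit of all of $F$, and then Theorem 3 applies with codomain $F$. Under that surjectivity reading your argument coincides with the paper's; under your weaker reading it has a genuine gap.

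Moreover the gap cannot be repaired, because with ``into'' meaning merely injective the statement is false. Take $E=C(\mathbb{R})$ with weak order unit $e=1$, fix a free ultrafilter $\mathcal{U}$ on $\mathbb{N}$, and set $S(g)=\pi/2+\lim_{\mathcal{U}}\arctan\bigl(g(n)-n\bigr)$. Since $\arctan$ is increasing and ultrafilter limits of bounded sequences commute with the maximum and minimum of two sequences, $S$ preserves finite $\vee$ and $\wedge$, and $S(\lambda 1)=0$ for every $\lambda\in\mathbb{R}$; yet $S$ is not additive (for $u(t)=t$ one gets $S(u)=\pi/2$, $S(1)=0$, $S(u+1)=3\pi/4$). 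Now let $F=E\times\mathbb{R}$ and $T(g)=(g,S(g))$. Then $T$ is an injective lattice homomorphism onto a sublattice of $F$ (with lattice-homomorphism inverse on its range), it satisfies $T(\lambda e)=\lambda T(e)$ with $T(e)=(1,0)$, and $T(e)$ even enjoys exactly the relative property you verify ($y\in T(E)$, $y\geq0$, $y\wedge T(e)=0$ imply $y=0$), but $T$ is not linear. So the surjectivity of $T$ onto $F$ (equivalently, the fact that $T(e)$ is a weak order unit of $F$ itself, which is what the paper proves) is an essential ingredient, not a dispensable convenience, and your proof must assume and use it.
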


\begin{proof}
It is sufficient to prove that $Te$ is a weak order unit of $F$. To see this,
let $0\leq y\in F$ and $y\wedge T(e)=0$ choose $0\leq x\in E$ with $T(x)=y$
then
\[
0=y\wedge T(e)=T(x)\wedge T(e)=T(x\wedge e)\text{.}
\]
This implies that $x\wedge e=0$ (because $T$ is an isomorphism). Then $x=0$
because $e$ is a weak order unit of $E$, so $y=0$. Hence $Te$ is a weak order
unit of $F$. Now, from the above theorem $T$ is a Riesz isomorphism.
\end{proof}

At the end of this paper, we give an important Banach-stone type theorem for
non linear homomorphisms between lattices of Lipschitz functions on metric
spaces (which are, in general, not uniformly complete spaces). For a metric
space $(X,d)$ we denote by $Lip(X)$, the unital vector lattice of all
lipschitz real function on $X$. In the following we show that the unital
vector lattice structure of $Lip(X)$ determines the lipschitzian structure of
a complete metric space $X$. Completeness can not be avoided here, since every
metric space has the same lipschitz functions as its completion.

\begin{theorem}
Let $(X,d)$ and $(Y,d^{\prime})$ be complete metric spaces and
$T:Lip(X)\rightarrow Lip(Y)$ be a lattice isomorphism with $T(\lambda
1)=\lambda T(1)$ for each $\lambda\in%
\mathbb{R}
$. Then $X$ and $Y$ are lipschitz homeomorphic.
\end{theorem}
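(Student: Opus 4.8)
The strategy is to first remove the nonlinearity with the machinery already developed, and then to read the two metric spaces off from the Riesz spaces $Lip(X)$ and $Lip(Y)$ together with their distinguished elements $1$. To begin, $1$ is a weak order unit of $Lip(X)$, so the Corollary just proved applies: $T$ is a Riesz isomorphism of $Lip(X)$ onto $Lip(Y)$, and in particular $T$ and $T^{-1}$ are both linear, bipositive bijections.

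\emph{A Kakutani picture.} Although $1$ need not be a strong order unit of $Lip(X)$ (and $Lip(X)$ need not be uniformly complete), $Lip(X)$ does have a strong order unit: fixing $x_{0}\in X$, every Lipschitz $f$ satisfies $|f(x)|\le|f(x_{0})|+L_{f}\,d(x,x_{0})$, so $u_{X}:=1+d(\cdot,x_{0})$ majorises each element of $Lip(X)$ up to a positive multiple. By the Kakutani theorem, as invoked in the proof of Theorem~1 above, the uniform completion of $Lip(X)$ is $C(K)$ for a compact Hausdorff space $K$, $Lip(X)$ is a uniformly dense Riesz subspace of $C(K)$, and $X$ embeds as a dense topological subspace of $K$ (the bounded truncated distances $d(\cdot,a)\wedge r$ separate the points of $X$ and generate its topology); we identify $X$ with its image and write $\widehat{f}\in C(K)$ for the image of $f\in Lip(X)$. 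Symmetrically, $Y$ embeds densely in a compact Hausdorff $K'$ with $Lip(Y)$ uniformly dense in $C(K')$, using $u_{Y}:=1+d'(\cdot,y_{0})$.

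\emph{The point map.} Fix $y\in Y$. Then $\delta_{y}\circ T:Lip(X)\rightarrow\mathbb{R}$ is a lattice homomorphism, and $(\delta_{y}\circ T)(u_{X})=T(u_{X})(y)>0$, because $T(u_{X})$ is the image of a strong order unit under a Riesz isomorphism \emph{onto} $Lip(Y)$, hence is itself a strong order unit of $Lip(Y)$ and so is bounded below by a positive constant. Dividing by $T(u_{X})(y)$ gives a lattice homomorphism fixing every real multiple of $u_{X}$, so Proposition~4, applied to $Lip(X)\subset C(K)$ with $u_{X}$ in the role of the constant $1$, yields a unique $\xi(y)\in K$ with
\[
T(f)(y)=T(u_{X})(y)\,\widehat{f}(\xi(y))\qquad\text{for all }f\in Lip(X).
\]
Running the same construction with $T^{-1}$ and $u_{Y}$ gives, for each $x\in X$, a point $\zeta(x)\in K'$ with $T^{-1}(g)(x)=T^{-1}(u_{Y})(x)\,\widehat{g}(\zeta(x))$ for all $g\in Lip(Y)$.

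\emph{The crucial step, and the conclusion.} The heart of the proof --- and the sole place where the completeness of $X$ and $Y$ is used --- is to show that $\xi(y)\in X$ for every $y\in Y$ (and, symmetrically, $\zeta(x)\in Y$ for every $x\in X$): that is, that these functionals are genuine point evaluations and not functionals living on the ``corona'' $K\setminus X$ (the metric incompleteness part, or the part at infinity). I expect this to be the main obstacle. The plan is: a point $\xi\in K$ is realised by a filter $\mathcal{F}$ on $X$ with $\widehat{f}(\xi)=\lim_{\mathcal{F}}f(x)/u_{X}(x)$ for all $f\in Lip(X)$; when $\xi=\xi(y)$, one first rules out the ``at infinity'' alternative by inspecting $\widehat{1}(\xi(y))=T(1)(y)/T(u_{X})(y)$ and using that $T$ is onto, and then, in the remaining case, one shows that $\rho(a):=\lim_{\mathcal{F}}d(x,a)$ is a well-defined nonnegative $1$-Lipschitz function on $X$ with $\rho(a)+\rho(b)\ge d(a,b)$ and $\inf_{a}\rho(a)=0$, so that $\mathcal{F}$ is a Cauchy filter; completeness of $X$ then provides $p\in X$ with $\rho(p)=0$, whence $\xi(y)=p\in X$. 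Granting this, set $\varphi(y):=\xi(y)\in X$; the displayed formula becomes $T(f)=T(1)\cdot(f\circ\varphi)$ for all $f\in Lip(X)$, a weighted composition operator with strictly positive weight $T(1)\in Lip(Y)$, and likewise $T^{-1}(g)=T^{-1}(1)\cdot(g\circ\zeta)$ with $\zeta:X\to Y$. Comparing $T\circ T^{-1}=\mathrm{id}$ and $T^{-1}\circ T=\mathrm{id}$ forces $\zeta=\varphi^{-1}$, so $\varphi:Y\to X$ is a bijection; the two representations make it a homeomorphism, and since $T$ and $T^{-1}$ send Lipschitz functions to Lipschitz functions, the standard unwinding of such weighted composition operators (as in the Banach--Stone theory for spaces of Lipschitz functions) shows that $\varphi$ and $\varphi^{-1}$ are Lipschitz. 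Hence $X$ and $Y$ are Lipschitz homeomorphic.
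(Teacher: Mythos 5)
Your first step coincides exactly with the paper's: $1$ is a weak order unit of $Lip(X)$, so Corollary 1 upgrades the lattice isomorphism $T$ to a Riesz (linear) isomorphism. At that point the paper simply invokes the Banach--Stone type theorem of Garrido and Jaramillo [3, Theorem 3.10] for complete metric spaces, whereas you set out to reprove that theorem via a Kakutani representation of $Lip(X)$ with the strong unit $u_{X}=1+d(\cdot,x_{0})$. The representation part of your argument (the formula $T(f)(y)=T(u_{X})(y)\,\widehat{f}(\xi(y))$ with $\xi(y)\in K$) is sound, but the decisive step --- showing that $\xi(y)$ actually lies in $X$ --- is precisely the content of [3, Theorem 3.10], and in your write-up it is only announced (``The plan is\dots'', ``Granting this\dots''), not carried out. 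So the proposal has a genuine gap exactly at the point you yourself flag as the main obstacle.

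Moreover, the sketch you give of that step does not go through as stated. The assertion that $\inf_{a}\rho(a)=0$, which is what makes the filter Cauchy and lets completeness finish the job, is false for a general real-valued lattice homomorphism fixing the multiples of $u_{X}$: take $X=\mathbb{N}$ with the discrete metric $d(n,m)=1$ for $n\neq m$; then $X$ is complete, $Lip(X)=\ell^{\infty}$, $K=\beta\mathbb{N}$, and every $\xi\in\beta\mathbb{N}\setminus\mathbb{N}$ yields such a homomorphism with $\widehat{1}(\xi)=1>0$ (so it is not ``at infinity'') while $\rho\equiv1$, so the associated filter is not Cauchy. Hence no Cauchy-filter argument can succeed on the basis of $\delta_{y}\circ T$ alone; one must use, in an essential and so far unspecified way, that $T$ is a bijection with lattice inverse $T^{-1}$ --- this is where the proof of [3, Theorem 3.10] does its real work. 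Similarly, ruling out the ``at infinity'' case ``using that $T$ is onto'' and the closing claim that the weighted composition structure forces $\varphi$ and $\varphi^{-1}$ to be Lipschitz are asserted rather than proved. If you want an argument independent of [3], these are the pieces that must be supplied; otherwise the efficient route is the paper's: apply Corollary 1 and quote [3, Theorem 3.10].
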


\begin{proof}
It is plain that $1$ is a weak order unit of $Lip(X)$ so from Corollary 1 $T$
is a Riesz isomorphism. Now, According to [3, Theorem 3.10, ] $X$ and $Y$ are
lipschitz homeomorphic.
\end{proof}

\end{document}